\documentclass[runningheads]{llncs}
\usepackage[T1]{fontenc}
\usepackage{graphicx}
\usepackage{lineno}
\usepackage{tcolorbox}
\usepackage{hyperref}
\usepackage{amsmath, amssymb}
\usepackage{cleveref}
\usepackage{booktabs}
\usepackage{color}

\usepackage{orcidlink}
\usepackage{pgfplots}

\usepackage{hyperref}
\hypersetup{
  colorlinks = true,
  citecolor = blue,
  linkcolor = blue,
  urlcolor = blue
}

\newcommand{\DS}{\mathsf{DS}}

\usepackage{cite}
\usepackage{tikz}
\usetikzlibrary{positioning, shapes.geometric, arrows.meta, calc}
\usetikzlibrary{backgrounds}
\usepackage{subcaption}

\usepackage{thm-restate}
\usepackage{comment}

\usepackage{upquote} 

\crefname{conjecture}{conjecture}{conjectures}
\Crefname{conjecture}{Conjecture}{Conjectures}
\crefname{question}{question}{questions}
\Crefname{question}{Question}{Questions}

\newcommand{\norm}[1]{\left\lVert #1 \right\rVert}
\newcommand{\alk}[1]{\textsf{AtLeast}_{#1}}

\begin{document}
\title{Doubly Saturated Ramsey Graphs: A Case Study in Computer-Assisted Mathematical Discovery}
\titlerunning{Doubly Saturated Ramsey Graphs}
%
\author{Benjamin Przybocki \orcidlink{0009-0007-5489-1733} \and
John Mackey \orcidlink{0000-0001-7319-4377} \and \\
Marijn J. H. Heule \orcidlink{0000-0002-5587-8801} \and Bernardo Subercaseaux \orcidlink{0000-0003-2295-1299}}
\authorrunning{Przybocki et al.}
%
\institute{Carnegie Mellon University, Pittsburgh, USA \\
\email{\{bprzyboc,jmackey,mheule,bsuberca\}@andrew.cmu.edu}}
\maketitle              
\begin{abstract}
Ramsey-good graphs are graphs that contain neither a clique of size $s$ nor an independent set of size $t$. We study doubly saturated Ramsey-good graphs, defined as Ramsey-good graphs in which the addition or removal of any edge necessarily creates an $s$-clique or a $t$-independent set. We present a method combining SAT solving with bespoke LLM-generated code to discover infinite families of such graphs, answering a question of Grinstead and Roberts from 1982. In addition, we use LLMs to generate and formalize correctness proofs in Lean. This case study highlights the potential of integrating automated reasoning, large language models, and formal verification to accelerate mathematical discovery. We argue that such tool-driven workflows will play an increasingly central role in experimental mathematics.

\keywords{Ramsey theory \and experimental mathematics \and artificial intelligence}
\end{abstract}
\section{Introduction}

In 2000, the mathematician Timothy Gowers presciently predicted that ``during the next century computers will become sufficiently good at proving theorems that the practice of pure mathematical research will be completely revolutionized''~\cite{gowers}. He looked forward to a ``golden age'' of mathematics, where computers can be helpful research assistants while still leaving room for humans to contribute deep insights. We believe that we are now entering this golden age, thanks to the combination of several technologies:
\begin{itemize}
    \item \textbf{Mature and efficient symbolic reasoning}: SAT solvers and computer algebra systems have evolved tremendously, and are currently able to reliably handle large computations.
    \item \textbf{LLMs capable of producing mathematical arguments}: Frontier models have demonstrated remarkable mathematical capabilities~\cite{yan2025surveymathematicalreasoningera}, ranging from competition problems~\cite{hubertOlympiadlevelFormalMathematical2026} to research-level questions and open problems~\cite{brenner2026solvingopenproblemtheoretical,gemini-math, feng2026semiautonomousmathematicsdiscoverygemini}.
    \item \textbf{Autoformalization}:
  Relying on proof assistants like Lean~\cite{lean4}, with large mathematical libraries~\cite{mathlib2020}, 
    LLMs can now formalize proofs for mathematical arguments of increasing complexity~\cite{urban2026130klinesformaltopology,weng2025autoformalizationeralargelanguage}.
\end{itemize}
In this paper, we give a glimpse of what this golden age looks like, and how these different technologies coalesce for accelerating mathematical research. Specifically, we show how our investigations into a natural problem in Ramsey theory were drastically assisted by the above technologies. Indeed, our theorems would have been very difficult to discover without computer assistance, and essentially all of the tedious work that inevitably arises in the course of research was fully automated.

\subsection{Problem and results}

A graph is \emph{$R(s,t)$-good} if it contains neither a clique of size $s$ nor an independent set of size $t$. Ramsey's celebrated theorem states that for each $s$ and $t$, there are only finitely many $R(s,t)$-good graphs~\cite{ramsey}. This theorem has been revisited and generalized many times and gave birth to a vast area of combinatorics called Ramsey theory~\cite{ramsey-theory-book}. In this paper, we revisit the topic once again. Let us say that a graph $G$ is \emph{doubly saturated $R(s,t)$-good} if it is $R(s,t)$-good, but adding or removing any edge from $G$ yields a graph that is not $R(s,t)$-good, and neither $G$ nor $\overline{G}$ is the complete graph.\footnote{The final condition ensures that it is possible to add and remove edges, which rules out, e.g., considering a graph on $1$ vertex as doubly saturated for all $s$ and $t$.} In other words, doubly saturated $R(s,t)$-good graphs are $R(s,t)$-good graphs that are edge-maximal and edge-minimal, excluding complete and empty graphs. What makes these graphs interesting is that they ``just barely'' avoid $s$-cliques and $t$-independent sets, so their construction involves a delicate balancing act. More broadly, our goal in understanding doubly saturated $R(s, t)$-good graphs is to understand better the space of $R(s, t)$-good graphs; as Angeltveit and McKay stated~\cite{Angeltveit2026}: ``The improvements in the upper bound for $R(5, 5)$ are intimately connected to our
understanding of [the set of $R(4, 5)$-good graphs on $n$ vertices] for $n$ large''. One way of understanding the space of $R(s, t)$-good graphs is to consider the partial ordering in which $G_1 \prec G_2$ if $G_2$ can be obtained from $G_1$ by adding edges. As illustrated in~\Cref{fig:hasse}, graphs can then be partitioned into weakly connected components of this ordering; doubly saturated graphs are precisely those that make up their entire connected component.

\begin{figure}
    \centering
    \input{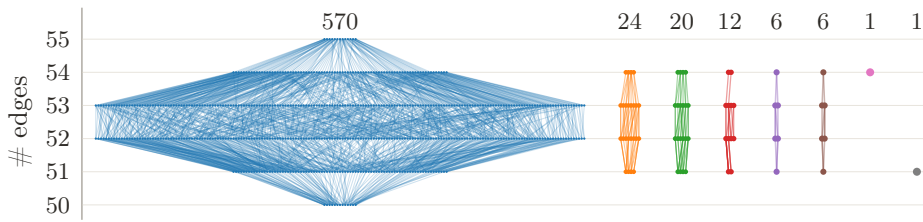}
    \caption{Hasse diagram for all 640 $R(4, 4)$-good graphs on $15$ vertices, including $2$ doubly saturated graphs. The size of each component is given above it.}
    \label{fig:hasse}
\end{figure}

According to Grinstead and Roberts~\cite{bicritical}, Albertson and Berman asked in 1980 whether there are any doubly saturated $R(s,t)$-good graphs other than $C_5$.\footnote{Grinstead and Roberts called these graphs \emph{bicritical}, but we avoid this terminology, since it is also used with a different meaning~\cite{bicritical-old}.} Grinstead and Roberts were able to find three others, all of which were circulant graphs, and they asked whether there were any others. Unfortunately, this natural problem has lain dormant since Grinstead and Roberts' paper.

With computer assistance, we prove that there are infinitely many circulant doubly saturated graphs:
\begin{restatable}{theorem}{thmrfour} \label{thm-r4t}
    For all $t \ge 4$, there is a doubly saturated $R(4,t)$-good graph on $6t-11$ vertices.
\end{restatable}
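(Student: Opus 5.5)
We construct an explicit circulant graph $G_t=\mathrm{Cay}(\mathbb{Z}_n,S_t)$ with $n=6t-11$, where $S_t=-S_t$ is the connection set, and verify the three required properties directly. For $t=4$ we get $n=13$, and the natural candidate is the Paley graph on $13$ vertices, i.e.\ quadratic residues $\{1,3,4,9,10,12\}$. It is the unique $R(4,4)$-critical circulant on 13 vertices, so the small case is settled by direct check.

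To find the general family, we would run a SAT search over symmetric connection sets for $t=5,6,7,8$ (orders $19,25,31,37$), imposing doubly saturated $R(4,t)$-goodness, and look for sets that share a pattern. The guess is a union of a bounded number of intervals of residues whose endpoints are linear in $t$. For example, $S_t$ might be $\pm$ the union of a short initial block with a block near $n/3$, or with a block of length about $t$ around $2t$, so that the complement contains a long arithmetic-progression-like run of non-edges of length roughly $t-1$.

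With $S_t$ fixed, the proof has four parts, all carried out for a clique or independent set containing vertex $0$ by vertex-transitivity.
\begin{enumerate}
\item \emph{No $K_4$.} Show that for $a,b,c\in S_t$, the differences $b-a$, $c-a$, $c-b$ cannot all lie in $S_t$. This is a finite case split over which interval each of $a,b,c$ falls into, with each case giving linear inequalities in $t$ that rule out the differences.
\item \emph{No independent set of size $t$.} Argue that the non-neighbourhood $\overline{S_t}\cup\{0\}$ contains no $t$ pairwise non-adjacent elements. The natural route is to project to gaps: sort the elements of a putative independent set cyclically; consecutive gaps lie in $\overline{S_t}$ and sum to $n$. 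Then a counting or weighting argument shows the number of gaps is at most $t-1$, using that $n=6t-11$ is just too small.
\item \emph{Adding an edge.} For each non-edge difference $d\notin S_t$, $d\neq 0$, exhibit explicitly two vertices $x,y$ with $x,y,x+d,y+d$ pairwise adjacent except for the pair $\{0,d\}$. These witnesses are given as formulas in $t$ and $d$ for each interval of $\overline{S_t}$.
\item \emph{Removing an edge.} For each $d\in S_t$, exhibit an independent set of size $t-2$ in the common non-neighbourhood of $0$ and $d$. Such a set is typically an arithmetic progression with step in $\overline{S_t}$, with endpoints depending on $d$.
\end{enumerate}

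Since neither $G_t$ nor its complement is complete, this finishes the proof.

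The main obstacle is part 2, proving $\alpha(G_t)\le t-1$. Upper bounds on independence number in circulants are global, unlike the local witness constructions in parts 3 and 4. The first approach is the gap argument: classify the allowed gaps into "short" and "long" values, with weights chosen so that each gap has weight at least $n/(t-1)$, approximately $6$. This works if the short non-edge differences force the following gaps to be long.

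If that fails, we would use an inductive argument. Removing a suitable structured set of 6 vertices (or a block) from $G_{t+1}$ should yield a graph whose independent sets embed into $G_t$, giving $\alpha(G_{t+1})\le\alpha(G_t)+1$. The key to this step is that $n$ increases by exactly $6$ when $t$ increases by $1$.

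The remaining parts are routine but tedious case analyses, which is where we would expect the SAT- and LLM-assisted verification, including a Lean formalization, to carry the load.
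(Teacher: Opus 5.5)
\textbf{There is a genuine gap: you never fix the connection set $S_t$.} Your four-part scheme matches the skeleton of the paper's proof: reduce to vertex $0$ by transitivity, rule out $K_4$, rule out $I_t$, give two common neighbours of $0$ and $d$ for each non-edge, and give an $I_t$ through $0,d$ for each edge. But the theorem is an existence statement, and its whole content is the construction. ``Run a SAT search for $t=5,\dots,8$ and guess an interval pattern'' is a discovery heuristic, not a proof. Until a specific $S_t$ is committed to, Parts 1--4 have nothing to verify.

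The missing idea is the family the paper uses. Set $m=t-2$ and $n=6m+1=6t-11$, and take the circulant with distances $\{m\}\cup[2m+1,3m]$. Then $u\sim v$ if and only if $u-v\in\{m,5m+1\}\cup[2m+1,4m]$. For $t=4$ this is isomorphic to your Paley graph on $13$ vertices. With this set every part is short:
\begin{itemize}
\item \emph{No $K_4$.} The four cyclic gaps of a putative $4$-clique sum to $6m+1$. This forces at least three gaps equal to $m$, hence two consecutive ones, which give the non-edge distance $2m$.
\item \emph{Adding an edge.} The $K_4$ witnesses are $\{0,d,2m+d+1,3m+d+1\}$ for $d\in[1,m-1]$ and $\{0,d,m+d,5m+1\}$ for $d\in[m+1,2m]$.
\item \emph{Removing an edge.} The $I_t$'s are explicit sets built from intervals. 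For example, $[-1,m-2]\cup\{m,2m-1\}$ works when $d=m$.
\end{itemize}

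\textbf{Your plan for $\alpha(G_t)\le t-1$ would not work for this family.} A weighting on consecutive gaps cannot succeed, because gaps of $1$ may follow one another: $[0,m-1]$ is independent. So your hope that short gaps force the next gap to be long fails, and only constraints on sums of several consecutive gaps can bound the size. The paper's argument is global instead. Every non-edge distance is at most $2m<n/3$, so an independent set can be rotated into $\{0\}\cup[1,m-1]\cup[m+1,2m]$. The other $m+1$ vertices then fall into the $m$ classes $\{i,i+m\}$ ($1\le i\le m-1$) and $\{2m\}$. By pigeonhole two of them are at distance $m$, which is an edge, so $\alpha\le m+1=t-1$.

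\textbf{A smaller slip in Part 3.} The witnesses should be $x,y$ adjacent to each other and to both $0$ and $d$, that is $x,\,y,\,x-d,\,y-d,\,x-y\in S_t$. Your condition ``$x,y,x+d,y+d$ pairwise adjacent'' is not the right one.
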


In fact, on the basis of experimental evidence, we conjecture that there are doubly saturated $R(s,t)$-good graphs for almost all choices of parameters:
\begin{conjecture} \label{conj-ds}
    Let $s,t \ge 3$ with $s \le t$ and $(s,t) \notin \{(3,4),(3,6)\}$. Then, there is a doubly saturated $R(s,t)$-good graph.
\end{conjecture}
In \Cref{sec-other-families}, we provide some evidence for this conjecture by verifying it for small choices of $s$ and $t$. Specifically, in \Cref{sec-r3t}, we verify the conjecture for $s = 3$ and $t \le 10$, and we propose a construction of an infinite family of graphs that we conjecture are doubly saturated $R(3,t)$-good. Then, in \Cref{sec-rss}, we use Paley graphs to show that there is a doubly saturated $R(s,s)$-good graph for all $s \in [3,20]$.

A simple computer search shows that there are no doubly saturated $R(3,4)$- or $R(3,6)$-good graphs.\footnote{This can easily be checked using Brendan McKay's enumeration of Ramsey-good graphs: \url{https://users.cecs.anu.edu.au/~bdm/data/ramsey.html}.} However, obtaining experimental data for larger choices of $s$ and $t$ requires efficient combinatorial optimization tooling. We use SAT solvers for this purpose, which allows us to prove for instance that the smallest doubly saturated $R(3, 7)$-good graph has $20$ vertices, and the smallest doubly saturated $R(4, 5)$-good graph has $19$ vertices. This relies on a compact SAT encoding with only $O(n^4)$ clauses for the double saturation constraint as well as symmetry-breaking constraints; we present this encoding in \Cref{sec-encoding}. In tandem with LLMs, we use these and similar computations to discover constructions of infinite families of doubly saturated graphs, including the one used to prove \Cref{thm-r4t}. This methodology is detailed in the next subsection.

\Cref{conj-ds} asks about the existence of doubly saturated $R(s,t)$-good graphs, but we can also ask more fine-grained questions about the set of doubly saturated $R(s,t)$-good graphs. One question that strikes us as particularly natural is how small a doubly saturated $R(s,t)$-good graph can be. In \Cref{sec-lb}, we present an LLM-discovered proof that every doubly saturated $R(s,t)$-good graph has at least $2s+2t-7$ vertices.

\subsection{Our computer-assisted methodology}\label{subsec:methodology}

\begin{figure}
    \centering
    \definecolor{ink}{RGB}{44, 52, 64}

\definecolor{stageone}{RGB}{49, 104, 142}
\definecolor{stageonefill}{RGB}{236, 243, 248}

\definecolor{stagetwo}{RGB}{44, 132, 125}
\definecolor{stagetwofill}{RGB}{233, 245, 242}

\definecolor{stagethree}{RGB}{184, 136, 43}
\definecolor{stagethreefill}{RGB}{249, 243, 228}

\definecolor{stagefour}{RGB}{186, 96, 65}
\definecolor{stagefourfill}{RGB}{250, 237, 232}

\definecolor{stagefive}{RGB}{124, 95, 156}
\definecolor{stagefivefill}{RGB}{242, 237, 247}

\begin{tikzpicture}[
    scale=0.68,
    transform shape,
    font=\large,
    basebox/.style={
        rectangle,
        rounded corners=2pt,
        draw=ink!75,
        line width=1.15pt,
        align=center,
        text=ink,
        text width=4.5cm,
        minimum height=2.0cm,
        inner sep=4pt
    },
    badge/.style={
        circle,
        minimum size=5.0mm,
        inner sep=0pt,
        anchor=north west,
        line width=0.9pt,
        text=white,
        font=\bfseries\small
    },
    arrow/.style={
        -{Stealth[length=3mm, width=2.6mm]},
        line width=1.35pt,
        draw=ink!68,
        line cap=round,
        line join=round,
        shorten <=1pt,
        shorten >=1pt
    }
]

\node[basebox, draw=stageone!85!ink, fill=stageonefill]
    (step1) at (0, 0) {
    \textbf{Formulate math problem}
};
\node[badge, fill=stageone!92!ink] at ([xshift=5pt, yshift=-5pt] step1.north west) {1};

\node[basebox, draw=stagetwo!85!ink, fill=stagetwofill]
    (step2) at (3.2, -3) {
    \textbf{Generate small-$n$ data} \\
    {\small (e.g., via SAT solvers)}
};
\node[badge, fill=stagetwo!92!ink] at ([xshift=5pt, yshift=-5pt] step2.north west) {2};

\node[basebox, draw=stagethree!88!ink, fill=stagethreefill]
    (step3) at (6.4, 0) {
    \vspace{0.1em}
    \textbf{Formulate conjecture} \\
    {\small (in tandem with LLMs)}
};
\node[badge, fill=stagethree!92!ink] at ([xshift=5pt, yshift=-5pt] step3.north west) {3};

\node[basebox, draw=stagefour!85!ink, fill=stagefourfill]
    (step4) at (9.6, -3) {
    \textbf{Prove conjecture} \\
    {\small (using LLMs)}
};
\node[badge, fill=stagefour!92!ink] at ([xshift=5pt, yshift=-5pt] step4.north west) {4};

\node[basebox, draw=stagefive!85!ink, fill=stagefivefill]
    (step5) at (12.8, 0) {
    \textbf{Formalize proof} \\
    {\small (via autoformalization system)}
};
\node[badge, fill=stagefive!92!ink] at ([xshift=5pt, yshift=-5pt] step5.north west) {5};

\draw[arrow] (step1) -- (step2);
\draw[arrow] (step2) -- (step3);
\draw[arrow] (step3) -- (step4);
\draw[arrow] (step4) -- (step5);

\end{tikzpicture}
    \caption{Computer-assisted mathematics research paradigm}
    \label{fig:paradigm}
\end{figure}
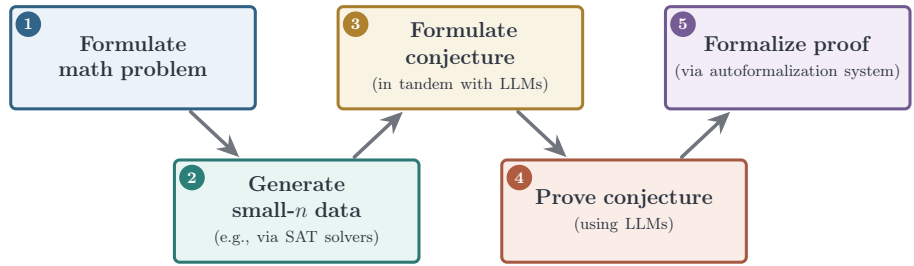

In this paper, we describe not only our mathematical results but also the methodology by which we discovered them, since we believe that workflows like ours will rapidly become the norm in experimental mathematics and will be a boon to the field. Our computer-assisted research paradigm is illustrated in \Cref{fig:paradigm}. Of course, these steps (except perhaps formalization) are part of the research process of almost every math problem, although it is only thanks to recent advances in AI and formal methods that multiple stages of this pipeline can be automated. Specifically, for step 2 of the process, we used SAT solvers to look for doubly saturated Ramsey-good graphs; in some cases, SAT solvers are not well suited to the problem, in which case we used bespoke LLM-generated code as a substitute. For about a decade now, it has been recognized that SAT solvers have tremendous potential for solving small cases of hard combinatorial problems~\cite{discrepancy,pythag,schur,keller,packing}. However, a common criticism of such proofs is that they do not scale to proving general results, partly because ``mere computations'' cannot provide the conceptual understanding that would be required for a general proof (see the discussion in \cite{varieties}). But this criticism is unwarranted in general: SAT solving (and computational experiments more generally) should be seen as an initial stage of the discovery process, not the end goal. This motivates step 3, where we studied the doubly saturated graphs found computationally, searched for patterns, and conjectured a general construction. We conjectured \Cref{thm-r4t} in the course of a dialogue with ChatGPT. This dialogue felt like a productive collaboration with a research partner, which was expedited by the fact that ChatGPT can autonomously run computational experiments to quickly assess hypotheses and analyze data. The role of the human in this dialogue was to guide ChatGPT toward promising strategies.

For step 4, we used LLMs to generate a proof of \Cref{thm-r4t} with essentially no human guidance. It is well known that LLMs have achieved remarkable performance on benchmarks and math competitions, but they are also starting to become useful for proving theorems arising in the course of actual research. Unfortunately, LLMs are notorious for producing proofs with fiendishly subtle errors. Thus, formalizing proofs in a proof assistant like Lean has become more important than ever. The primary barrier to the mainstream adoption of proof assistants has historically been their steep learning curve, together with the laborious nature of formalizing nontrivial mathematics. But here too, advances in AI and formal methods have changed the game. In our case, using an informal proof generated by Gemini, Harmonic's autoformalization system called Aristotle~\cite{achim2025aristotleimolevelautomatedtheorem} was able to autonomously write over 1000 lines of Lean code to formally verify \Cref{thm-r4t}.

\subsection{Related work}

\paragraph{Mathematical related work.} Saturation problems in graph theory have been studied since Erd\H{o}s, Hajnal, and Moon's classic paper~\cite{ehm}, which studied graphs for which the addition of any edge creates a $k$-clique. Specifically, they determined the minimum number of edges in an $n$-vertex graph avoiding $k$-cliques such that the addition of any edge creates a $k$-clique. These graphs were further studied by Hajnal~\cite{hajnal}, and we use one of Hajnal's theorems in \Cref{sec-lb}. Similar saturation problems have been studied with respect to subgraphs other than cliques~\cite{saturation-minimal,saturation-survey} and with respect to induced subgraphs~\cite{induced-saturation}. Damásdi et al.~\cite{saturation-ramsey} studied Ramsey-good graphs that are vertex-saturated, meaning that adding a vertex and connecting it to any subset of the existing vertices always yields a graph that is not Ramsey-good. The same paper also proved saturation results for other problems in Ramsey theory. More broadly, our work falls under the umbrella of studying objects that avoid some pattern but no longer do so after a small perturbation. The first author studied a problem in this vein in combinatorics on words, constructing words avoiding certain types of repetitions such that changing any letter creates such a repetition~\cite{przybocki}.

\paragraph{AI related work.} As mentioned above, SAT solvers have become an essential tool for computer-assisted mathematics, particularly for combinatorial problems~\cite{discrepancy,pythag,schur,keller,packing}. Most applications of SAT to mathematics focus on solving particular cases of open problems (step 2 from \Cref{fig:paradigm}), but some work has gone further and used data generated from SAT experiments to discover general theorems. For example, Subercaseaux et al.~\cite{pentagon-minimization} applied SAT to pentagon minimization in planar point sets and discovered a general construction, whose correctness they were able to prove by hand. Aside from SAT, using LLMs to assist with mathematical research has recently become a hot topic~\cite{gpt5-math,gemini-math,autonomous-math}. However, to the best of our knowledge, there are no case studies on the interplay between SAT and LLMs in spurring mathematical discovery, which is a significant gap given the recognized potential of neuro-symbolic AI~\cite{neurosymbolic}. The closest work to ours is by Xia et al.~\cite{xia2026agenticneurosymboliccollaborationmathematical}, which also uses a neuro-symbolic pipeline for a combinatorial design problem, integrating LLMs and Lean autoformalization. Their work does not include SAT solvers, instead using a custom Rust solver as well as computer algebra systems.

\subsection{Preliminaries}

All graphs in this paper are simple and finite. The \emph{complete graph} on $n$ vertices is denoted $K_n$, and the \emph{empty graph} on $n$ vertices is denoted $I_n$. Given a graph $G$, its \emph{clique number}, denoted $\omega(G)$, is the largest $n$ such that $G$ contains $K_n$ as a subgraph; its \emph{independence number}, denoted $\alpha(G)$, is the largest $n$ such that $G$ contains $I_n$ as an induced subgraph. The minimum (respectively, maximum) degree of vertices in $G$ is denoted $\delta(G)$ (respectively, $\Delta(G)$).

Given $x \in \mathbb{Z}/n\mathbb{Z}$, let
\(
    \norm{x} := \min(x \bmod n, n - (x \bmod n)).
\)
Then, a \emph{circulant graph} on $n$ vertices with distances $S \subseteq [\lfloor{n/2}\rfloor]$ is the graph with vertex set $\mathbb{Z}/n\mathbb{Z}$ such that $x$ and $y$ are adjacent if and only if $\norm{x-y} \in S$.

\section{A SAT encoding of double saturation} \label{sec-encoding}

In this section, we describe how to compactly encode the double saturation condition as a CNF formula that can be fed to a SAT solver. Given $s$, $t$, and $n$, we construct a CNF formula that is satisfiable if and only if there is a doubly saturated $R(s,t)$-good graph on $n$ vertices. Naturally, we have edge variables $e_{\{i,j\}}$ for each $\{i,j\} \in \binom{[n]}{2}$, which indicate whether the graph contains the edge $\{i,j\}$. To encode that the graph contains no $s$-clique, we use the following clauses:
\[
    \bigwedge_{S \in \binom{[n]}{s}} \left( \bigvee_{\{i,j\} \in \binom{S}{2}} \overline{e_{\{i,j\}}} \right).
\]
The absence of $t$-independent sets is encoded similarly. Note that this direct encoding of $R(s,t)$-goodness uses $\binom{n}{s} + \binom{n}{t}$ clauses.\footnote{An encoding with asymptotically fewer clauses using fast matrix multiplication is possible but unlikely to be practical~\cite{k-clique}.} To enforce that the graph is neither the complete graph nor the empty graph, we add the clauses $\bigvee_{i,j} e_{\{i,j\}}$ and $\bigvee_{i,j} \overline{e_{\{i,j\}}}$.

It remains to encode the double saturation condition, which amounts to enforcing:
\begin{itemize}
    \item[] (\textbf{Maximality})
    every non-edge $\{i,j\}$ is part of a $K_s \setminus \{\{i,j\}\}$
    \item[] (\textbf{Minimality}) and every edge $\{i,j\}$ is part of an $I_t \cup \{\{i,j\}\}$.
\end{itemize}
A naive encoding of these constraints would use $O(n^s + n^t)$ clauses for a fixed $s$ and $t$. Perhaps surprisingly, we show that the double saturation condition can be encoded using only $O(n^4)$ clauses. 
Let us describe the encoding for the maximality property, since minimality is analogous. For each $\{i,j\} \in \binom{[n]}{2}$ and each $k \in [n] \setminus \{i,j\}$, we introduce an auxiliary variable $p_{\{i,j\},k}$, which intuitively represents that vertex $k$ belongs to the $K_s \setminus \{\{i, j\}\}$ certifying maximality in case $\{i, j\}$ is a non-edge. Our encoding includes the following clauses:
\begin{align*}
    &\bigwedge_{\{i,j\} \in \binom{[n]}{2}} \bigwedge_{k \in [n] \setminus \{i,j\}} \left( e_{\{i,j\}} \lor \overline{p_{\{i,j\}, k}} \lor e_{\{i,k\}} \right) \land \left( e_{\{i,j\}} \lor \overline{p_{\{i,j\}, k}} \lor e_{\{j,k\}} \right) \\
   \land &\bigwedge_{\{i,j\} \in \binom{[n]}{2}} \bigwedge_{\{k,k'\} \in \binom{[n] \setminus \{i,j\}}{2}} \left( e_{\{i,j\}} \lor \overline{p_{\{i,j\},k}} \lor \overline{p_{\{i,j\},k'}} \lor e_{\{k,k'\}} \right) \\
   \land &\bigwedge_{\{i,j\} \in \binom{[n]}{2}} \alk{s-2}(\{p_{\{i,j\},k} \mid k \in [n] \setminus \{i,j\}\}).
\end{align*}
These constraints enforce that for every non-edge $\{i,j\}$, there are at least $s-2$ vertices that are all pairwise adjacent and adjacent to both $i$ and $j$, which is to say that $\{i,j\}$ is part of a $K_s \setminus \{\{i,j\}\}$. An illustration is presented in~\Cref{fig:encoding}. We encode $\alk{s-2}(\{p_{\{i,j\},k} \mid k \in [n] \setminus \{i,j\}\})$ with $O(ns)$ clauses using Sinz's sequential counter encoding~\cite{sinz}; we use the implementation of cardinality constraints provided by PySAT \cite{pysat}. Therefore, the number of clauses is $O(n^4)$.

\begin{figure}
    \begin{subfigure}{0.49\linewidth}
        \centering
        \begin{tikzpicture}
    \node[circle, fill=blue, text=white, minimum width=17pt, inner sep=1pt] (i) at (135:1.5) {$i$};
    \node[circle, fill=blue, text=white, minimum width=17pt, inner sep=1pt] (j) at (225:1.5) {$j$};
    \node[circle, fill=orange!50,  minimum width=17pt, inner sep=1pt] (k) at (315:1.5) {\scriptsize $k_2$};
    \node[circle, fill=orange!50,  minimum width=17pt, inner sep=1pt] (kp) at (45:1.5) {\scriptsize $k_1$};

    \draw[thick] (i) -- (k);
    \draw[thick] (i) -- (kp);

    \draw[thick] (j) -- (k);
    \draw[thick] (j) -- (kp);

    \draw[thick] (k) -- (kp);

    \draw[thick, dashed, red] (i) -- (j);
\end{tikzpicture}
        \caption{$s=4$}
    \end{subfigure}
    \begin{subfigure}{0.49\linewidth}
        \centering
        \begin{tikzpicture}[scale=0.67]
    \node[circle, fill=blue, text=white, minimum width=17pt, inner sep=0] (i) at (154.29:2.5) {$i$};
    \node[circle, fill=blue, text=white, minimum width=17pt,  inner sep=0] (j) at (205.71:2.5) {$j$};
    
    \node[circle, fill=orange!50, minimum width=17pt, inner sep=1pt] (k1) at (257.14:2.5) {\scriptsize $k_5$};
    \node[circle, fill=orange!50, minimum width=17pt, inner sep=1pt] (k2) at (308.57:2.5) {\scriptsize $k_4$};
    \node[circle, fill=orange!50, minimum width=17pt, inner sep=1pt] (k3) at (0:2.5) {\scriptsize $k_3$};
    \node[circle, fill=orange!50, minimum width=17pt, inner sep=1pt] (k4) at (51.43:2.5) {\scriptsize $k_2$};
    \node[circle, fill=orange!50, minimum width=17pt, inner sep=1pt] (k5) at (102.86:2.5) {\scriptsize $k_1$};

    \begin{scope}[on background layer]
        \foreach \x in {1,...,5} {
            \foreach \y in {1,...,5} {
                \draw[thick] (k\x) -- (k\y);
            }
            \draw[thick] (k\x) -- (i);
            \draw[thick] (k\x) -- (j);
        }
    \end{scope}
    
    \draw[thick, dashed, red] (i) -- (j);
    
\end{tikzpicture}
          \caption{$s=7$}
    \end{subfigure}
    \caption{Illustration of the encoding for maximality. If edge $\{i, j\}$ is not present, there must be vertices $k_1, \dots, k_{s-2}$ such that adding edge $\{i, j\}$ would introduce a $K_{s}$.}
    \label{fig:encoding}
\end{figure}
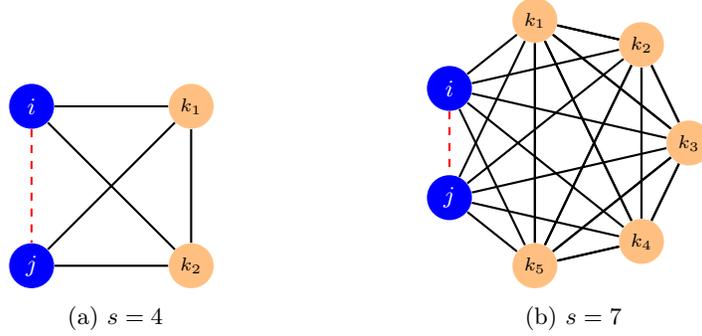

To make the SAT encoding performant, we also introduce lexicographic symmetry-breaking constraints~\cite[Definition~8]{codish}. Namely, we enforce for each $i < j$ that
\begin{align*}
    &(e_{\{i,1\}}, \dots, e_{\{i,i-1\}}, e_{\{i,i+1\}}, \dots, e_{\{i,j-1\}}, e_{\{i,j+1\}}, \dots, e_{\{i,n\}}) \preceq_{\text{lex}} \\
    &(e_{\{j,1\}}, \dots, e_{\{j,i-1\}}, e_{\{j,i+1\}}, \dots, e_{\{j,j-1\}}, e_{\{j,j+1\}}, \dots, e_{\{j,n\}}).
\end{align*}
We encode the $\preceq_{\text{lex}}$ constraint between two sequences $(a_1, \dots, a_n)$, $(b_1, \dots, b_n)$ of propositional variables via the following recursion. First, we add a clause $(\overline{a_1} \lor b_1)$, then introduce an auxiliary variable $y$, and add 4 clauses to enforce that $y \leftrightarrow (a_1 \leftrightarrow b_1)$. Finally, we recursively encode $(a_2, \dots, a_n) \preceq_{\text{lex}} (b_2, \dots, b_n)$ into a formula $\lambda_{n-1}$, and add clauses $(\overline{y} \lor C)$ for every $C \in \lambda_{n-1}$.

\section{Doubly saturated $R(4,t)$-good graphs} \label{sec-r4t}

Since there is a complete enumeration of $R(4,4)$-good graphs, it is a simple computation to check that there are six doubly saturated $R(4,4)$-good graphs: one on 13 vertices, two on 15 vertices, two on 16 vertices, and one on 17 vertices. The doubly saturated $R(4,4)$-good graph on 13 vertices is the Paley graph of order 13, which is the circulant graph with distances $\{1,3,4\}$.

Following these observations, we turned to looking for doubly saturated $R(4,5)$-good graphs. There are far too many $R(4,5)$-good graphs for a complete enumeration (the estimate of Angeltveit and McKay is about $2.93 \cdot 10^{19}$~\cite[Appendix A]{Angeltveit2026}), so we applied SAT to the problem using the encoding from \Cref{sec-encoding}. We found that there are no doubly saturated $R(4,5)$-good graphs on 18 or fewer vertices. On the other hand, for 19 vertices, a SAT solver (CaDiCaL~\cite{cadical}) does not terminate after a day on a single core. Thus, we made an ansatz that a solution would be circulant, as suggested by some of the $R(4,4)$ results. We found that there is a circulant doubly saturated $R(4,5)$-good graph on 19 vertices with distances $\{4,5,6,8\}$. Later, we applied Mallob~\cite{mallob} to our SAT encoding, and it found a solution isomorphic to this circulant graph, which suggests that it may be the unique doubly saturated $R(4,5)$-good graph on 19 vertices.

At this point, we asked ChatGPT-5.1 Pro to look for circulant doubly saturated $R(4,t)$-good graphs for $t \in [6,12]$. For large values of $t$, the SAT encoding becomes unwieldy because there are too many clauses required to enforce the absence of $t$-independent sets, but bespoke code generated by the LLM succeeded in finding circulant constructions for each $t \in [6,12]$.

Having found nine constructions, we asked ChatGPT to extrapolate from these to find a general construction that works for all $t \ge 4$. ChatGPT quickly recognized that every construction so far had $n := 6t-11$ vertices, so it remained to find the patterns in the distance sets. ChatGPT tried several approaches, which varied in how promising they seemed to us. For example, analyzing the spectral properties of the graphs did not seem to us to be as fruitful as studying the additive-combinatorial properties of the distance sets. Accordingly, we encouraged ChatGPT to look deeper into the more promising strategies. Besides this minimal shepherding, the discovery process was almost entirely LLM-driven. ChatGPT realized that when specifying a circulant graph, there are many distance sets that result in isomorphic graphs. Motivated by this observation, ChatGPT tried rewriting each of the nine distance sets into a canonical form from which patterns would become clearer. After a few iterations of this, it discovered the following construction for the distance sets, which agreed with all nine of our constructions up to isomorphism: $\{1\} \cup \{m \in [\lfloor n/2 \rfloor] \mid m \equiv 3,4 \pmod{6}\}$. At the time, ChatGPT was unable to prove that the construction works.

With the release of ChatGPT-5.4 Pro, we revisited the problem. This time, ChatGPT not only managed to produce a proof but also showed that the conjectured construction has a simpler isomorphic description. Specifically, letting $m = t-2$ and $n = 6t-11$, the above construction is isomorphic to the circulant graph on $n$ vertices with distances $\{m\} \cup [2m+1, 3m]$. See \Cref{fig:construction} for a depiction of the construction for $t=4$ and $t=5$.

We next set our sights on obtaining a formal proof that the construction works. We believe that this is important when using LLMs to prove theorems, because LLM-generated proofs are notorious for containing subtle errors, and formalization eliminates any worries about correctness. Moreover, autoformalization avoids wasting mathematicians' time on checking flawed LLM-generated proofs, and can be further used to refine proof attempts on a loop based on errors that the formalization process reveals.
To autoformalize a proof, it is important to start with an informal proof that is sufficiently detailed. We prompted Gemini 3 Deep Think\footnote{Switching to Gemini from ChatGPT was simply because each of us prefers a different LLM, not because we had any reason to expect Gemini to be better at this task.} to ``write a proof that is very detailed and formal so that it can readily be formalized in Lean.'' We fed this proof into Aristotle, which generated a complete formal proof in Lean after two attempts.\footnote{The formalization is available at \url{https://github.com/bprzybocki/doubly-saturated}.}

Below, we present an informal proof that the construction works. The proof has been human-edited for clarity and concision, particularly because the LLM-generated proof had some needless case distinctions. Since the formal proof contains all of the details, we write the informal proof somewhat tersely.

\thmrfour*
\begin{proof}
Let $m = t - 2$, meaning the number of vertices is $n = 6m + 1 = 6t - 11$. Let $G$ be the circulant graph on $n$ vertices with distances $\{m\} \cup [2m+1, 3m]$. Thus, $u$ and $v$ are adjacent if and only if $u-v \in \{m, 5m+1\} \cup [2m+1, 4m]$.

To show $G$ is doubly saturated $R(4, t)$-good, we must establish four properties: $\omega(G) < 4$, $\alpha(G) < t$, adding any edge creates a $K_4$, and removing any edge creates an $I_t$.

\textbf{Part 1: $\omega(G) < 4$.} Assume for contradiction that a clique of $4$ vertices exists. The forward cyclic differences (gaps) $g_1, g_2, g_3, g_4$ between adjacent vertices must satisfy $g_1 + g_2 + g_3 + g_4 = n = 6m+1$. At least three gaps must equal $m$, otherwise $g_1 + g_2 + g_3 + g_4 \ge 2m + 2 \cdot (2m+1) > 6m+1 = n$. Thus, there is some $i \in [4]$ for which we have $g_i = g_{i+1} = m$ (where $g_5 = g_1$). But then, $g_i + g_{i+1} = 2m \notin \{m, 5m+1\} \cup [2m+1, 4m]$, a contradiction.

\textbf{Part 2: $\alpha(G) < t$.} Assume for contradiction that an independent set of $t=m+2$ vertices exists. Let the vertices be $v_1, \dots, v_t$. Then, $\norm{v_i - v_j} \in [0,m-1] \cup [m+1,2m]$ for all $i,j \in [t]$. Since $\norm{v_i - v_j} \le 2m < n/3$, we can cyclically shift the vertices so that $v_1 = 0$ and $v_i \in [1,m-1] \cup [m+1,2m]$ for all $i \in [2,t]$. But since $\{v_2,\dots,v_t\}$ is a subset of $[1,m-1] \cup [m+1,2m]$ of size $m+1$, there must be some pair $v_i$ and $v_j$ such that $\norm{v_i - v_j} = m$, a contradiction.

\textbf{Part 3: Adding any edge creates a $K_4$.} Suppose we add an edge $(0, d)$ where $d \in [1, m-1] \cup [m+1, 2m]$. First, if $d \in [1,m-1]$, then $\{0,d,2m+d+1,3m+d+1\}$ is a $K_4$. Second, if $d \in [m+1,2m]$, then $\{0,d,m+d,5m+1\}$ is a $K_4$.

\textbf{Part 4: Removing any edge creates an $I_t$.} Suppose we remove an edge $(0, d)$ where $d \in S$. First, if $d = m$, then $[-1,m-2] \cup \{m,2m-1\}$ is an $I_t$. Second, if $d \in [2m+1,3m-1]$, then $\{0,2m,d\} \cup ([d-2m, d-m-1] \setminus \{m\})$ is an $I_t$. Third, if $d = 3m$, then $[-2m,-m-1] \cup \{0,3m\}$ is an $I_t$.
\end{proof}

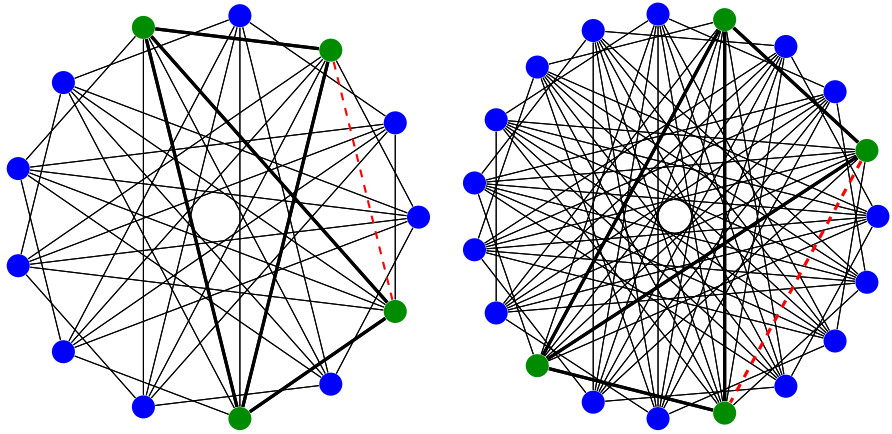
\begin{figure}
    \begin{subfigure}{0.49\linewidth}
        \centering
    \begin{tikzpicture}[scale=0.9]
\node[circle, fill=blue, text=white] (v0) at (3.0, 0.0) {};
\node[circle, fill=blue, text=white] (v1) at (2.65636807695963, 1.3941695161313055) {};
\node[circle, fill=green!55!black, text=white] (v2) at (1.7041942401934675, 2.4689515976809693) {};
\node[circle, fill=blue, text=white] (v3) at (0.36161004076596903, 2.978126622294162) {};
\node[circle, fill=green!55!black, text=white] (v4) at (-1.0638146611276063, 2.8050487280562444) {};
\node[circle, fill=blue, text=white] (v5) at (-2.2455322445133037, 1.9893679747223856) {};
\node[circle, fill=blue, text=white] (v6) at (-2.912825452278156, 0.717946992862673) {};
\node[circle, fill=blue, text=white] (v7) at (-2.9128254522781565, -0.7179469928626723) {};
\node[circle, fill=blue, text=white] (v8) at (-2.2455322445133037, -1.989367974722385) {};
\node[circle, fill=blue, text=white] (v9) at (-1.0638146611276076, -2.805048728056244) {};
\node[circle, fill=green!55!black, text=white] (v10) at (0.3616100407659696, -2.978126622294162) {};
\node[circle, fill=blue, text=white] (v11) at (1.7041942401934644, -2.468951597680971) {};
\node[circle, fill=green!55!black, text=white] (v12) at (2.6563680769596303, -1.3941695161313052) {};
\draw[black] (v0) -- (v2);
\draw[black] (v0) -- (v5);
\draw[black] (v0) -- (v6);
\draw[black] (v0) -- (v7);
\draw[black] (v0) -- (v8);
\draw[black] (v0) -- (v11);
\draw[black] (v1) -- (v3);
\draw[black] (v1) -- (v6);
\draw[black] (v1) -- (v7);
\draw[black] (v1) -- (v8);
\draw[black] (v1) -- (v9);
\draw[black] (v1) -- (v12);
\draw[black] (v2) -- (v0);
\draw[very thick] (v2) -- (v4);
\draw[black] (v2) -- (v7);
\draw[black] (v2) -- (v8);
\draw[black] (v2) -- (v9);
\draw[very thick] (v2) -- (v10);
\draw[black] (v3) -- (v1);
\draw[black] (v3) -- (v5);
\draw[black] (v3) -- (v8);
\draw[black] (v3) -- (v9);
\draw[black] (v3) -- (v10);
\draw[black] (v3) -- (v11);
\draw[very thick] (v4) -- (v2);
\draw[black] (v4) -- (v6);
\draw[black] (v4) -- (v9);
\draw[very thick] (v4) -- (v10);
\draw[black] (v4) -- (v11);
\draw[very thick] (v4) -- (v12);
\draw[black] (v5) -- (v0);
\draw[black] (v5) -- (v3);
\draw[black] (v5) -- (v7);
\draw[black] (v5) -- (v10);
\draw[black] (v5) -- (v11);
\draw[black] (v5) -- (v12);
\draw[black] (v6) -- (v0);
\draw[black] (v6) -- (v1);
\draw[black] (v6) -- (v4);
\draw[black] (v6) -- (v8);
\draw[black] (v6) -- (v11);
\draw[black] (v6) -- (v12);
\draw[black] (v7) -- (v0);
\draw[black] (v7) -- (v1);
\draw[black] (v7) -- (v2);
\draw[black] (v7) -- (v5);
\draw[black] (v7) -- (v9);
\draw[black] (v7) -- (v12);
\draw[black] (v8) -- (v0);
\draw[black] (v8) -- (v1);
\draw[black] (v8) -- (v2);
\draw[black] (v8) -- (v3);
\draw[black] (v8) -- (v6);
\draw[black] (v8) -- (v10);
\draw[black] (v9) -- (v1);
\draw[black] (v9) -- (v2);
\draw[black] (v9) -- (v3);
\draw[black] (v9) -- (v4);
\draw[black] (v9) -- (v7);
\draw[black] (v9) -- (v11);
\draw[very thick] (v10) -- (v2);
\draw[black] (v10) -- (v3);
\draw[very thick] (v10) -- (v4);
\draw[black] (v10) -- (v5);
\draw[black] (v10) -- (v8);
\draw[very thick] (v10) -- (v12);
\draw[black] (v11) -- (v0);
\draw[black] (v11) -- (v3);
\draw[black] (v11) -- (v4);
\draw[black] (v11) -- (v5);
\draw[black] (v11) -- (v6);
\draw[black] (v11) -- (v9);
\draw[black] (v12) -- (v1);
\draw[very thick] (v12) -- (v4);
\draw[black] (v12) -- (v5);
\draw[black] (v12) -- (v6);
\draw[black] (v12) -- (v7);
\draw[very thick] (v12) -- (v10);
\draw[red, dashed, thick] (v2) -- (v12);
\end{tikzpicture}
    \end{subfigure}
    \begin{subfigure}{0.49\linewidth}
        \centering
\begin{tikzpicture}[scale=0.9]
\node[circle, fill=blue, text=white] (v0) at (3.0, 0.0) {};
\node[circle, fill=green!55!black, text=white] (v1) at (2.837451725101904, 0.9740984076140504) {};
\node[circle, fill=blue, text=white] (v2) at (2.3674215281891806, 1.8426381380690033) {};
\node[circle, fill=blue, text=white] (v3) at (1.6408444743672808, 2.5114994347875856) {};
\node[circle, fill=green!55!black, text=white] (v4) at (0.7364564614223977, 2.908200797817991) {};
\node[circle, fill=blue, text=white] (v5) at (-0.24773803641699682, 2.9897534790200098) {};
\node[circle, fill=blue, text=white] (v6) at (-1.2050862739589083, 2.747319979965172) {};
\node[circle, fill=blue, text=white] (v7) at (-2.0318447148772227, 2.2071717320193955) {};
\node[circle, fill=blue, text=white] (v8) at (-2.638421253619467, 1.427842179111221) {};
\node[circle, fill=blue, text=white] (v9) at (-2.959083910208167, 0.4937837708422021) {};
\node[circle, fill=blue, text=white] (v10) at (-2.959083910208167, -0.4937837708422014) {};
\node[circle, fill=blue, text=white] (v11) at (-2.6384212536194678, -1.4278421791112192) {};
\node[circle, fill=green!55!black, text=white] (v12) at (-2.031844714877223, -2.2071717320193946) {};
\node[circle, fill=blue, text=white] (v13) at (-1.2050862739589072, -2.747319979965173) {};
\node[circle, fill=blue, text=white] (v14) at (-0.2477380364169982, -2.9897534790200098) {};
\node[circle, fill=green!55!black, text=white] (v15) at (0.7364564614223964, -2.9082007978179916) {};
\node[circle, fill=blue, text=white] (v16) at (1.6408444743672796, -2.511499434787586) {};
\node[circle, fill=blue, text=white] (v17) at (2.3674215281891815, -1.842638138069002) {};
\node[circle, fill=blue, text=white] (v18) at (2.837451725101904, -0.9740984076140512) {};
\draw[black] (v0) -- (v3);
\draw[black] (v0) -- (v7);
\draw[black] (v0) -- (v8);
\draw[black] (v0) -- (v9);
\draw[black] (v0) -- (v10);
\draw[black] (v0) -- (v11);
\draw[black] (v0) -- (v12);
\draw[black] (v0) -- (v16);
\draw[very thick] (v1) -- (v4);
\draw[black] (v1) -- (v8);
\draw[black] (v1) -- (v9);
\draw[black] (v1) -- (v10);
\draw[black] (v1) -- (v11);
\draw[very thick] (v1) -- (v12);
\draw[black] (v1) -- (v13);
\draw[black] (v1) -- (v17);
\draw[black] (v2) -- (v5);
\draw[black] (v2) -- (v9);
\draw[black] (v2) -- (v10);
\draw[black] (v2) -- (v11);
\draw[black] (v2) -- (v12);
\draw[black] (v2) -- (v13);
\draw[black] (v2) -- (v14);
\draw[black] (v2) -- (v18);
\draw[black] (v3) -- (v0);
\draw[black] (v3) -- (v6);
\draw[black] (v3) -- (v10);
\draw[black] (v3) -- (v11);
\draw[black] (v3) -- (v12);
\draw[black] (v3) -- (v13);
\draw[black] (v3) -- (v14);
\draw[black] (v3) -- (v15);
\draw[very thick] (v4) -- (v1);
\draw[black] (v4) -- (v7);
\draw[black] (v4) -- (v11);
\draw[very thick] (v4) -- (v12);
\draw[black] (v4) -- (v13);
\draw[black] (v4) -- (v14);
\draw[very thick] (v4) -- (v15);
\draw[black] (v4) -- (v16);
\draw[black] (v5) -- (v2);
\draw[black] (v5) -- (v8);
\draw[black] (v5) -- (v12);
\draw[black] (v5) -- (v13);
\draw[black] (v5) -- (v14);
\draw[black] (v5) -- (v15);
\draw[black] (v5) -- (v16);
\draw[black] (v5) -- (v17);
\draw[black] (v6) -- (v3);
\draw[black] (v6) -- (v9);
\draw[black] (v6) -- (v13);
\draw[black] (v6) -- (v14);
\draw[black] (v6) -- (v15);
\draw[black] (v6) -- (v16);
\draw[black] (v6) -- (v17);
\draw[black] (v6) -- (v18);
\draw[black] (v7) -- (v0);
\draw[black] (v7) -- (v4);
\draw[black] (v7) -- (v10);
\draw[black] (v7) -- (v14);
\draw[black] (v7) -- (v15);
\draw[black] (v7) -- (v16);
\draw[black] (v7) -- (v17);
\draw[black] (v7) -- (v18);
\draw[black] (v8) -- (v0);
\draw[black] (v8) -- (v1);
\draw[black] (v8) -- (v5);
\draw[black] (v8) -- (v11);
\draw[black] (v8) -- (v15);
\draw[black] (v8) -- (v16);
\draw[black] (v8) -- (v17);
\draw[black] (v8) -- (v18);
\draw[black] (v9) -- (v0);
\draw[black] (v9) -- (v1);
\draw[black] (v9) -- (v2);
\draw[black] (v9) -- (v6);
\draw[black] (v9) -- (v12);
\draw[black] (v9) -- (v16);
\draw[black] (v9) -- (v17);
\draw[black] (v9) -- (v18);
\draw[black] (v10) -- (v0);
\draw[black] (v10) -- (v1);
\draw[black] (v10) -- (v2);
\draw[black] (v10) -- (v3);
\draw[black] (v10) -- (v7);
\draw[black] (v10) -- (v13);
\draw[black] (v10) -- (v17);
\draw[black] (v10) -- (v18);
\draw[black] (v11) -- (v0);
\draw[black] (v11) -- (v1);
\draw[black] (v11) -- (v2);
\draw[black] (v11) -- (v3);
\draw[black] (v11) -- (v4);
\draw[black] (v11) -- (v8);
\draw[black] (v11) -- (v14);
\draw[black] (v11) -- (v18);
\draw[black] (v12) -- (v0);
\draw[very thick] (v12) -- (v1);
\draw[black] (v12) -- (v2);
\draw[black] (v12) -- (v3);
\draw[very thick] (v12) -- (v4);
\draw[black] (v12) -- (v5);
\draw[black] (v12) -- (v9);
\draw[very thick] (v12) -- (v15);
\draw[black] (v13) -- (v1);
\draw[black] (v13) -- (v2);
\draw[black] (v13) -- (v3);
\draw[black] (v13) -- (v4);
\draw[black] (v13) -- (v5);
\draw[black] (v13) -- (v6);
\draw[black] (v13) -- (v10);
\draw[black] (v13) -- (v16);
\draw[black] (v14) -- (v2);
\draw[black] (v14) -- (v3);
\draw[black] (v14) -- (v4);
\draw[black] (v14) -- (v5);
\draw[black] (v14) -- (v6);
\draw[black] (v14) -- (v7);
\draw[black] (v14) -- (v11);
\draw[black] (v14) -- (v17);
\draw[black] (v15) -- (v3);
\draw[very thick] (v15) -- (v4);
\draw[black] (v15) -- (v5);
\draw[black] (v15) -- (v6);
\draw[black] (v15) -- (v7);
\draw[black] (v15) -- (v8);
\draw[very thick] (v15) -- (v12);
\draw[black] (v15) -- (v18);
\draw[black] (v16) -- (v0);
\draw[black] (v16) -- (v4);
\draw[black] (v16) -- (v5);
\draw[black] (v16) -- (v6);
\draw[black] (v16) -- (v7);
\draw[black] (v16) -- (v8);
\draw[black] (v16) -- (v9);
\draw[black] (v16) -- (v13);
\draw[black] (v17) -- (v1);
\draw[black] (v17) -- (v5);
\draw[black] (v17) -- (v6);
\draw[black] (v17) -- (v7);
\draw[black] (v17) -- (v8);
\draw[black] (v17) -- (v9);
\draw[black] (v17) -- (v10);
\draw[black] (v17) -- (v14);
\draw[black] (v18) -- (v2);
\draw[black] (v18) -- (v6);
\draw[black] (v18) -- (v7);
\draw[black] (v18) -- (v8);
\draw[black] (v18) -- (v9);
\draw[black] (v18) -- (v10);
\draw[black] (v18) -- (v11);
\draw[black] (v18) -- (v15);
\draw[red, dashed, very thick] (v1) -- (v15);
\end{tikzpicture}
    \end{subfigure}
    
    \caption{Illustration of the construction for $t = 4$ and $t = 5$ ($13$ and $19$ vertices respectively). Dashed red edges are not part of the construction, and adding them introduces the $K_4$ subgraph given by the green vertices and already present thick edges.}
    \label{fig:construction}
\end{figure}

An interesting lingering question is whether the construction in \Cref{thm-r4t} is as small as possible. Our SAT experiments demonstrate this for $t \le 5$.

\begin{question} \label{q-minimal}
    For each $t \ge 4$, is the smallest doubly saturated $R(4,t)$-good graph of size $6t-11$?
\end{question}

We remark that the largest $R(4,t)$-good graphs are known to be of size $\widetilde{\Theta}(t^3)$~\cite{r4t-lower,r4t-upper}, so the doubly saturated $R(4,t)$-good graphs constructed in \Cref{thm-r4t} are much smaller than the largest $R(4,t)$-good graphs.

\section{Other families of doubly saturated Ramsey graphs} \label{sec-other-families}

\subsection{Doubly saturated $R(3,t)$-good graphs} \label{sec-r3t}

We have some experimental results regarding doubly saturated $R(3,t)$-good graphs, including a conjectural infinite family. First, we present experimental observations for small values of $t$:
\begin{itemize}
    \item There is a unique doubly saturated $R(3,3)$-good graph, namely $C_5$.
    \item There are no doubly saturated $R(3,4)$-good graphs.
    \item There is a unique doubly saturated $R(3,5)$-good graph, namely the circulant graph on 13 vertices with distances $\{1,5\}$.
    \item There are no doubly saturated $R(3,6)$-good graphs.
    \item Using our SAT encoding, we found that there is a doubly saturated $R(3,7)$-good graph on 20 vertices, and there are none with fewer than 20 vertices. We found 18 satisfying assignments that were all isomorphic, which suggests that the solution may be unique. Interestingly, the graph is 5-regular and vertex-transitive, although it is not circulant. We asked ChatGPT whether it was possible to decompose the graph into various patterned substructures, and with its help, we eventually discovered that it can be partitioned into four disjoint 5-cycles with a matching between each pair of cycles (see \Cref{fig:r37}). This pattern might form the basis of a general construction.
    \item Using our SAT encoding, we have found two non-isomorphic doubly saturated $R(3,8)$-good graphs on 25 vertices. There may be more than two such graphs, and we do not know if they are minimal, although we do know that there are none on 21 or fewer vertices.
    \item There is a unique $R(3,9)$-good graph on 35 vertices~\cite{r39-unique}, and it is therefore doubly saturated. It is also a circulant graph.
    \item We have found over 100 doubly saturated $R(3,10)$-good graphs on 39 vertices by searching through the known $R(3,10)$-good graphs on 39 vertices.
\end{itemize}

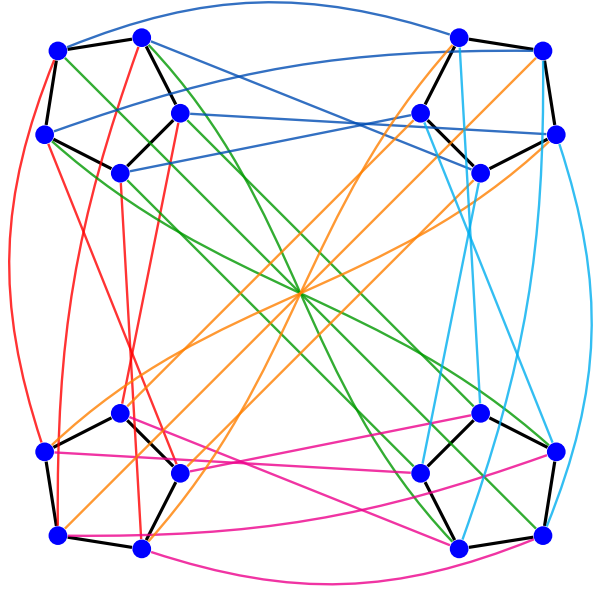
\begin{figure}
    \centering
    \begin{tikzpicture}[rotate=45,
    scale=0.8,
  vertex/.style={circle, fill=blue, minimum size=0mm, inner sep=2.5pt},
  cycleedge/.style={black, line width=1.2pt},
  matchedge/.style={line width=0.9pt, opacity=0.8}
]

\node[vertex] (1) at (0.000000, 5.700000) {~};
\node[vertex] (2) at (-5.700000, 0.000000) {~};
\node[vertex] (3) at (-1.141268, -4.870820) {~};
\node[vertex] (4) at (1.141268, 4.870820) {~};
\node[vertex] (5) at (-0.000000, -5.700000) {~};
\node[vertex] (6) at (5.700000, 0.000000) {~};
\node[vertex] (7) at (-4.870820, 1.141268) {~};
\node[vertex] (8) at (4.870820, -1.141268) {~};
\node[vertex] (9) at (4.870820, 1.141268) {~};
\node[vertex] (10) at (1.141268, -4.870820) {~};
\node[vertex] (11) at (-3.529180, 0.705342) {~};
\node[vertex] (12) at (3.529180, -0.705342) {~};
\node[vertex] (13) at (-3.529180, -0.705342) {~};
\node[vertex] (14) at (0.705342, -3.529180) {~};
\node[vertex] (15) at (-1.141268, 4.870820) {~};
\node[vertex] (16) at (-4.870820, -1.141268) {~};
\node[vertex] (17) at (-0.705342, -3.529180) {~};
\node[vertex] (18) at (0.705342, 3.529180) {~};
\node[vertex] (19) at (3.529180, 0.705342) {~};
\node[vertex] (20) at (-0.705342, 3.529180) {~};

\draw[cycleedge] (1)--(4)--(18)--(20)--(15)--(1);
\draw[cycleedge] (6)--(8)--(12)--(19)--(9)--(6);
\draw[cycleedge] (5)--(3)--(17)--(14)--(10)--(5);
\draw[cycleedge] (2)--(7)--(11)--(13)--(16)--(2);


\draw[matchedge,red] (1) to[bend right=20] (7);
\draw[matchedge,red] (4)to[bend right=10](2);
\draw[matchedge,red] (15)--(13);
\draw[matchedge,red] (18)--(11);
\draw[matchedge,red] (20)--(16);
\draw[matchedge,green!60!black] (1)--(5);
\draw[matchedge,green!60!black] (4) to[out=-90,in=90] (3);
\draw[matchedge,green!60!black] (15) to[out=-85,in=95] (10);
\draw[matchedge,green!60!black] (18)--(14);
\draw[matchedge,green!60!black] (20)--(17);
\draw[matchedge,blue!70!green] (1) to[bend left=20] (9);
\draw[matchedge,blue!70!green] (4) -- (12);
\draw[matchedge,blue!70!green] (15) to[bend left=10] (6);
\draw[matchedge,blue!70!green] (18)--(8);
\draw[matchedge,blue!70!green] (20)--(19);
\draw[matchedge,magenta] (2) to[bend right=10] (10);
\draw[matchedge,magenta] (7)--(17);
\draw[matchedge,magenta] (11)--(3);
\draw[matchedge,magenta] (13)--(14);
\draw[matchedge,magenta] (16)to[bend right=20](5);
\draw[matchedge,orange] (2) -- (6);
\draw[matchedge,orange] (7) to[out=0,in=-180] (8);
\draw[matchedge,orange] (11)--(19);
\draw[matchedge,orange] (13) -- (12);
\draw[matchedge,orange] (16) to[out=5,in=-175] (9);
\draw[matchedge,cyan] (3) to[bend right=10] (6);
\draw[matchedge,cyan] (5) to[bend right=20](8);
\draw[matchedge,cyan] (10)--(19);
\draw[matchedge,cyan] (14)--(9);
\draw[matchedge,cyan] (17)--(12);
\end{tikzpicture}
    \caption{A doubly saturated $R(3,7)$-good graph on 20 vertices}
    \label{fig:r37}
\end{figure}

Using a similar workflow as described in \Cref{sec-r4t}, we collaborated with ChatGPT-5.1 Pro to find circulant constructions for doubly saturated $R(3,t)$-good graphs. After much experimentation, we conjecture the following:
\begin{conjecture} \label{conj-r3t}
    For every odd $t \ge 17$, the following graph is doubly saturated $R(3,t)$-good: the circulant graph on $5t-10$ vertices with distances $[t-4,t-3] \cup [t+1, (3t-9)/2] \cup \{(3t-5)/2\} \cup \{2t-4\}$.
\end{conjecture}
We have not yet managed to prove this conjecture, although we have checked it for $t \le 63$. We expect that in the near future, it will be feasible for LLMs to both prove \Cref{conj-r3t} and formally verify it as was done for \Cref{thm-r4t}.

\subsection{Doubly saturated $R(s,s)$-good graphs} \label{sec-rss}

The question of whether there is a doubly saturated $R(s,s)$-good graph is perhaps the most interesting case of \Cref{conj-ds}.  We have verified this claim for $s \le 20$ using Paley graphs of prime order. Since Paley graphs are symmetric and self-complementary, to check whether a Paley graph is doubly saturated, we only need to add an arbitrary edge and check if the clique number increases. We use Cliquer~\cite{cliquer} to compute clique numbers of these graphs. \Cref{tab:paley} shows, for each $s \in [3,20]$, the smallest prime $p$ such that the Paley graph of order $p$ is doubly saturated $R(s,s)$-good.

\begin{table}
    \centering
    \caption{The smallest prime $p$ such that the Paley graph on $p$ vertices is a doubly saturated $R(s,s)$-good graph}
    {
    \setlength{\tabcolsep}{2pt} 
    \begin{tabular}{ c | c | c | c | c | c | c | c | c | c | c | c | c | c | c | c | c | c | c }
    \toprule
  ~~$s$~~ & 3 & 4 & 5 & 6 & 7 & 8 & 9 & 10 & 11 & 12 & 13 & 14 & 15 & 16 & 17 & 18 & 19 & 20 \\
  \midrule
  ~~\small $p$~~ & \small 5 & \small 13 & \small 29 & \small 53 & \small 97 & \small 137 & \small 173 & \small 317 & \small 577 & \small 541 & \small 613 & \small 1301 & \small 1373 & \small 1481 & \small 2389 & \small 3001 & \small 4861 & \small 3169\\
  \bottomrule
    \end{tabular}
    }
    \label{tab:paley}
\end{table}

As depicted in \Cref{fig:paley-prop}, our experimental results show that approximately 75\% of prime-order Paley graphs are doubly saturated. On the basis of these results, we strongly expect that there is a doubly saturated $R(s,s)$-good Paley graph for each $s \ge 3$, although this seems difficult to prove.

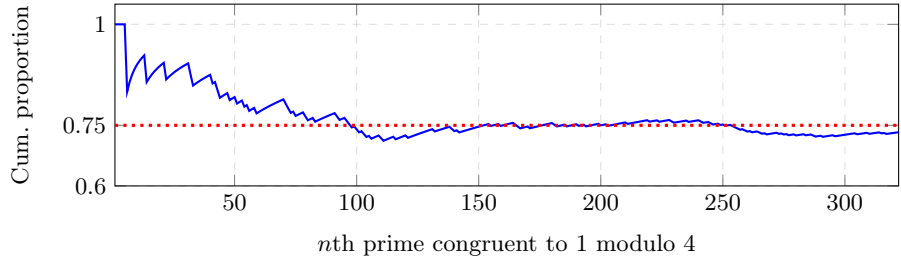
\begin{figure}
    \centering
    \begin{tikzpicture}
\begin{axis}[
    width=12cm,
    height=4cm,
    xlabel={$n$th prime congruent to 1 modulo 4},
    ylabel={Cum. proportion},
    ymin=0.6, ymax=1.05,
    xmin=1, xmax=322,
    ytick={0.6, 0.75, 1}, 
    grid=both,
    grid style={dashed, opacity=0.5},
]

\addplot[
    color=blue,
    thick
] coordinates {
    (1,1.0000) (2,1.0000) (3,1.0000) (4,1.0000) (5,1.0000) (6,0.8333) (7,0.8571) (8,0.8750) (9,0.8889) (10,0.9000) (11,0.9091) (12,0.9167) (13,0.9231) (14,0.8571) (15,0.8667) (16,0.8750) (17,0.8824) (18,0.8889) (19,0.8947) (20,0.9000) (21,0.9048) (22,0.8636) (23,0.8696) (24,0.8750) (25,0.8800) (26,0.8846) (27,0.8889) (28,0.8929) (29,0.8966) (30,0.9000) (31,0.9032) (32,0.8750) (33,0.8485) (34,0.8529) (35,0.8571) (36,0.8611) (37,0.8649) (38,0.8684) (39,0.8718) (40,0.8750) (41,0.8537) (42,0.8571) (43,0.8372) (44,0.8182) (45,0.8222) (46,0.8261) (47,0.8298) (48,0.8125) (49,0.8163) (50,0.8200) (51,0.8039) (52,0.8077) (53,0.8113) (54,0.7963) (55,0.8000) (56,0.7857) (57,0.7895) (58,0.7931) (59,0.7797) (60,0.7833) (61,0.7869) (62,0.7903) (63,0.7937) (64,0.7969) (65,0.8000) (66,0.8030) (67,0.8060) (68,0.8088) (69,0.8116) (70,0.8143) (71,0.8028) (72,0.7917) (73,0.7808) (74,0.7838) (75,0.7733) (76,0.7763) (77,0.7792) (78,0.7821) (79,0.7722) (80,0.7625) (81,0.7654) (82,0.7683) (83,0.7590) (84,0.7619) (85,0.7647) (86,0.7674) (87,0.7701) (88,0.7727) (89,0.7753) (90,0.7778) (91,0.7802) (92,0.7717) (93,0.7634) (94,0.7660) (95,0.7684) (96,0.7604) (97,0.7526) (98,0.7449) (99,0.7475) (100,0.7400) (101,0.7327) (102,0.7353) (103,0.7282) (104,0.7212) (105,0.7238) (106,0.7170) (107,0.7196) (108,0.7222) (109,0.7248) (110,0.7182) (111,0.7117) (112,0.7143) (113,0.7168) (114,0.7193) (115,0.7217) (116,0.7155) (117,0.7179) (118,0.7203) (119,0.7227) (120,0.7250) (121,0.7190) (122,0.7213) (123,0.7236) (124,0.7258) (125,0.7280) (126,0.7302) (127,0.7323) (128,0.7344) (129,0.7364) (130,0.7385) (131,0.7405) (132,0.7348) (133,0.7368) (134,0.7388) (135,0.7407) (136,0.7426) (137,0.7445) (138,0.7464) (139,0.7410) (140,0.7357) (141,0.7376) (142,0.7324) (143,0.7343) (144,0.7361) (145,0.7379) (146,0.7397) (147,0.7415) (148,0.7432) (149,0.7450) (150,0.7467) (151,0.7483) (152,0.7500) (153,0.7516) (154,0.7532) (155,0.7484) (156,0.7500) (157,0.7516) (158,0.7532) (159,0.7484) (160,0.7500) (161,0.7516) (162,0.7531) (163,0.7546) (164,0.7561) (165,0.7515) (166,0.7470) (167,0.7425) (168,0.7440) (169,0.7456) (170,0.7471) (171,0.7427) (172,0.7442) (173,0.7457) (174,0.7471) (175,0.7486) (176,0.7500) (177,0.7514) (178,0.7528) (179,0.7542) (180,0.7556) (181,0.7514) (182,0.7473) (183,0.7486) (184,0.7500) (185,0.7514) (186,0.7473) (187,0.7487) (188,0.7500) (189,0.7513) (190,0.7526) (191,0.7487) (192,0.7500) (193,0.7513) (194,0.7474) (195,0.7487) (196,0.7500) (197,0.7513) (198,0.7525) (199,0.7538) (200,0.7500) (201,0.7512) (202,0.7525) (203,0.7488) (204,0.7500) (205,0.7512) (206,0.7524) (207,0.7536) (208,0.7548) (209,0.7560) (210,0.7571) (211,0.7536) (212,0.7547) (213,0.7559) (214,0.7570) (215,0.7581) (216,0.7593) (217,0.7604) (218,0.7615) (219,0.7626) (220,0.7591) (221,0.7602) (222,0.7613) (223,0.7623) (224,0.7589) (225,0.7600) (226,0.7611) (227,0.7621) (228,0.7632) (229,0.7598) (230,0.7565) (231,0.7576) (232,0.7586) (233,0.7597) (234,0.7607) (235,0.7617) (236,0.7585) (237,0.7595) (238,0.7605) (239,0.7615) (240,0.7625) (241,0.7593) (242,0.7562) (243,0.7572) (244,0.7582) (245,0.7551) (246,0.7520) (247,0.7530) (248,0.7540) (249,0.7510) (250,0.7520) (251,0.7490) (252,0.7500) (253,0.7510) (254,0.7480) (255,0.7451) (256,0.7422) (257,0.7393) (258,0.7403) (259,0.7375) (260,0.7346) (261,0.7356) (262,0.7328) (263,0.7338) (264,0.7348) (265,0.7321) (266,0.7293) (267,0.7303) (268,0.7276) (269,0.7286) (270,0.7296) (271,0.7306) (272,0.7316) (273,0.7289) (274,0.7299) (275,0.7273) (276,0.7283) (277,0.7256) (278,0.7266) (279,0.7276) (280,0.7250) (281,0.7260) (282,0.7270) (283,0.7279) (284,0.7254) (285,0.7263) (286,0.7273) (287,0.7247) (288,0.7222) (289,0.7232) (290,0.7241) (291,0.7216) (292,0.7226) (293,0.7235) (294,0.7245) (295,0.7254) (296,0.7230) (297,0.7239) (298,0.7248) (299,0.7258) (300,0.7267) (301,0.7276) (302,0.7285) (303,0.7294) (304,0.7303) (305,0.7279) (306,0.7288) (307,0.7296) (308,0.7305) (309,0.7314) (310,0.7290) (311,0.7299) (312,0.7308) (313,0.7316) (314,0.7325) (315,0.7302) (316,0.7310) (317,0.7287) (318,0.7296) (319,0.7304) (320,0.7312) (321,0.7321) (322,0.7329)
};

\addplot[
    color=red,
    dotted,
    very thick,
    domain=1:322,
    samples=2 
] {0.75};

\end{axis}
\end{tikzpicture}
    \caption{Cumulative proportion of prime-order Paley graphs that are doubly saturated}
    \label{fig:paley-prop}
\end{figure}

\section{A lower bound for the double saturation number} \label{sec-lb}

We define the \emph{double saturation number} of $s$ and $t$ as follows:
\[
    \DS(s,t) := \inf \{n \mid \text{there is a doubly saturated $R(s,t)$-good graph on $n$ vertices}\}.
\]
\Cref{q-minimal} asks whether $\DS(4,t) = 6t-11$ for all $t \ge 4$. More generally, it is natural to ask for bounds on $\DS(s,t)$. We asked Gemini 3.1 Deep Think to prove a lower bound on $\DS(s,t)$, and it proved that $\DS(s,t) \ge 2s + 2t - 7$ with an elegant argument. To present the argument, we need a theorem due to Hajnal~\cite{hajnal}. We say that a graph $G$ is \emph{$k$-saturated} if it does not contain a $K_{k+1}$ subgraph, but adding any edge to $G$ yields a graph containing a $K_{k+1}$. Hajnal proved the following:
\begin{theorem}[{\!\!\!\!\;\;\!\!\cite[Theorem~1]{hajnal}}] \label{thm-hajnal}
    If $G$ is $k$-saturated, then either $G$ has a vertex adjacent to every other vertex or $\delta(G) \ge 2(k-1)$.
\end{theorem}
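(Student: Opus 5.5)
We would prove the contrapositive: assume $G$ is $k$-saturated with no vertex adjacent to every other vertex, and show that every vertex has degree at least $2(k-1)$. Fix a vertex $v$ and set $S := N(v)$ and $W := V(G) \setminus (S \cup \{v\})$. Since $v$ is not dominating, $W \neq \emptyset$. Two facts are immediate.

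\begin{itemize}
\item $G[S]$ contains no $K_k$, since together with $v$ it would give a $K_{k+1}$.
\item For every $w \in W$, adding the edge $vw$ creates a $K_{k+1}$. That clique contains both $v$ and $w$, so $S \cap N(w)$ contains a $(k-1)$-clique $C_w$.
\end{itemize}

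In particular $|S| \ge k-1$. The task is to find $k-1$ further vertices of $S$ outside some such clique.

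The plan is to fix $w \in W$ and a clique $C := C_w \subseteq S \cap N(w)$, and to produce, for each $c \in C$, a vertex of $S \setminus C$ "charged" to $c$, with the charged vertices distinct. By the first fact, $C \cup \{x\}$ is not a clique for any $x \in S \setminus C$. Since $c$ is not dominating, it has a non-neighbour $y_c$.

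\begin{itemize}
\item If $y_c \in S$, then $y_c \in S \setminus C$.
\item If $y_c \in W$, we use saturation on the non-edge $cy_c$. This yields a $(k-1)$-clique $D$ in $N(c) \cap N(y_c)$, and we would exploit the swap $c \mapsto y_c$: either $D$ meets $S$ outside $C$, or we pass to the clique $C_{y_c}$ and compare.
\end{itemize}

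The intended outcome is a matching argument. We build the bipartite "non-adjacency" graph between $C$ and $S \setminus C$ and verify Hall's condition. For $A \subseteq C$, we want at least $|A|$ vertices of $S \setminus C$ that are non-adjacent to some member of $A$. If this failed, the set $T$ of such vertices would be small. Then $(C \setminus A) \cup$ (a suitable $K_{|A|}$ found inside $T \cup \ldots$) would give a clique in $S$ of size $k$, contradicting the first fact. Alternatively it would give a vertex adjacent to everything, contradicting non-domination.

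The main obstacle is exactly this Hall-type step: distinct vertices of $C$ could share a single non-neighbour in $S \setminus C$, so naive charging fails. What we would try first is to choose $C$ extremally, for example a $(k-1)$-clique in $S$ that maximizes the number of edges to $S \setminus C$, or whose vertices have the fewest common non-neighbours. We would then use an exchange argument: if $z \in S \setminus C$ is the only non-neighbour of both $c, c' \in C$ within $S$, then $(C \setminus \{c\}) \cup \{z\}$ or a similar swap contradicts extremality or creates a $K_k$ in $S$.

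If this direct route stalls, the fallback is induction on $k$. We would show that for a suitable vertex $x$ (e.g.\ $x \in C$), the graph $G[N(x) \cap N(v)]$ behaves like a $(k-1)$-saturated graph without a dominating vertex. That would give at least $2(k-2)$ vertices there, to which we add $x$ itself and one more vertex witnessing that $x$ is not dominating within $S$, yielding $|S| \ge 2(k-1)$.
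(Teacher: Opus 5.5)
The paper does not prove this statement; it quotes it from Hajnal. Your proposal therefore has to stand on its own, and it has a genuine gap.

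\textbf{The gap.} Your preliminary facts are correct. These are: $G[S]$ is $K_k$-free, every $w\in W$ yields a $(k-1)$-clique $C_w\subseteq S\cap N(w)$, and no $C\cup\{x\}$ with $x\in S\setminus C$ is a clique. But the step that carries the whole theorem, $|S\setminus C|\ge k-1$, is never proved. The Hall step is only asserted (``a suitable $K_{|A|}$ found inside $T\cup\ldots$''), and you yourself flag it as the obstacle.

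Moreover, the contradiction you aim for cannot be extracted from $G[S]$ alone. For $k\ge 3$, take $G[S]=K_{k-1}\cup K_1$. It is $K_k$-free, contains a $K_{k-1}$, and has no vertex adjacent to all of $S$. Yet it violates Hall, since the whole $K_{k-1}$ has a single non-neighbour in $S\setminus C$. So you must saturate some non-edge and use vertices outside $S$, which the sketch never does.

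The fallback induction also fails. $G[N(x)\cap N(v)]$ is $K_{k-1}$-free but need not be saturated, because the $K_{k+1}$ created by adding an edge inside it need not contain $x$ or $v$. Even granting saturation, the correct index would be $k-2$, which only yields $|S|\ge 2k-3$.

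\textbf{How to complete your Hall route.} The route does work, and no extremal choice of $C$ is needed. Let $B$ be your non-adjacency graph. Let $A\subseteq C$ be an inclusion-minimal set with $|N_B(A)|<|A|$, and put $T=N_B(A)$. Then $|T|=|A|-1$, and Hall holds for every proper subset of $A$.

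\emph{Every $(k-1)$-clique of $S$ contains $A$.} Each $a\in A$ is adjacent to every vertex of $S\setminus(T\cup\{a\})$, so $A$ is complete to $S\setminus(A\cup T)$. Because $G[S]$ is $K_k$-free, every $(k-1)$-clique $K\subseteq S$ therefore satisfies $|K\cap(A\cup T)|\ge|A|$. Since $K\cap T$ avoids $N_B(K\cap A)$, Hall for proper subsets forces $A\subseteq K$.

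\emph{Non-neighbours of $A$ lie in $T$.} Applying this to the cliques $C_w$ shows that $A$ is complete to $W$. Hence every non-neighbour of $a\in A$ lies in $N_B(a)\subseteq T$. Since $a$ is not dominating, $T\neq\emptyset$, so $|A|\ge 2$, and by minimality $N_B(A\setminus\{a\})=T$.

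\emph{A forbidden clique.} Take $z\in N_B(a)$ and some $a'\in A\setminus\{a\}$ with $a'\not\sim z$. Let $D$ be the $(k-1)$-clique created by adding the edge $az$. Then $a'\notin D$, and $D\cup\{a\}$ is a clique. Hall for $A^*=(D\cap A)\cup\{a\}\subsetneq A$ gives $|D\cap T|\le |T|-|A^*|$, that is, $|D\cap A|+|D\cap T|\le |A|-2$. Consequently $A\cup(D\setminus T)$ is a clique of $G$ with at least $k+1$ vertices, contradicting $K_{k+1}$-freeness.

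Supplying this argument is what turns your plan into a proof.
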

\noindent
Now, we can prove our lower bound on $\DS(s,t)$:
\begin{theorem}
    For all $s,t \ge 3$, we have $\DS(s,t) \ge 2s + 2t - 7$.
\end{theorem}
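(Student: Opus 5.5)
The plan is to apply \Cref{thm-hajnal} to both $G$ and its complement $\overline{G}$, which gives lower bounds on both minimum degrees. Adding the two bounds at a single vertex then yields the claim.

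Let $G$ be a doubly saturated $R(s,t)$-good graph on $n$ vertices. Since $G$ has no $K_s$ and adding any edge creates a $K_s$, $G$ is $(s-1)$-saturated. Dually, removing any edge of $G$ creates an $I_t$, which is the same as adding an edge to $\overline{G}$ creating a $K_t$. So $\overline{G}$ is $(t-1)$-saturated. The requirement that neither $G$ nor $\overline{G}$ is complete ensures both saturation notions are non-vacuous. By \Cref{thm-hajnal}, either $G$ has a vertex adjacent to all others, or $\delta(G)\ge 2(s-2)$. Likewise, either $\overline{G}$ has such a vertex (i.e.\ $G$ has an isolated vertex), or $\delta(\overline{G})\ge 2(t-2)$.

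The main step, and the only real obstacle, is ruling out the degenerate alternatives. Suppose $v$ is adjacent to every other vertex of $G$. Since $n\ge 2$ (as $G$ is not complete), there is an edge $uv$. Removing it must create an independent set of size $t$ containing both $u$ and $v$. Such a set needs $t-2\ge 1$ further vertices, each non-adjacent to $v$, which is impossible. Symmetrically, an isolated vertex $v$ in $G$ has some non-edge $uv$. Adding it must create a $K_s$ containing $u,v$ and $s-2\ge 1$ further vertices adjacent to $v$, which is again impossible. This is where $s,t\ge 3$ is used. Hence $\delta(G)\ge 2s-4$ and $\delta(\overline{G})\ge 2t-4$.

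Finally, fix any vertex $x$. Then $n-1=\deg_G(x)+\deg_{\overline{G}}(x)\ge (2s-4)+(2t-4)$, so $n\ge 2s+2t-7$. Taking the infimum over all such $G$ gives $\DS(s,t)\ge 2s+2t-7$; the bound is trivially true if no such graph exists, since then $\DS(s,t)=\infty$.
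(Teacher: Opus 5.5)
Your proof is correct and follows the paper's argument. You rule out a vertex adjacent to all others in $G$ (using $t\ge 3$) and in $\overline{G}$ (using $s\ge 3$), apply Hajnal's theorem to both graphs, and then combine the two minimum-degree bounds; your degree count $\deg_G(x)+\deg_{\overline{G}}(x)=n-1$ at a single vertex is the same as the paper's step $\delta(G)\le\Delta(G)=n-1-\delta(\overline{G})$.
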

\begin{proof}
    Suppose that $G$ is a doubly saturated $R(s,t)$-good graph. First, we claim that $G$ does not have a vertex adjacent to every other vertex. Indeed, suppose for the sake of contradiction that $v \in V(G)$ is adjacent to every vertex in $V(G) \setminus \{v\}$. Let $w \in V(G) \setminus \{v\}$ be an arbitrary vertex, and let $G'$ be the graph obtained from $G$ by removing the edge $\{v,w\}$. Then, $G'$ contains an $I_t$ involving $v$. Since $v$ is adjacent to every other vertex except $w$ in $G'$, the largest independent set involving $v$ is $\{v,w\}$, which contradicts our assumption that $t \ge 3$. Thus, $G$ does not have a vertex adjacent to every other vertex. By symmetry, the same property holds for $\overline{G}$.

    Now, by \Cref{thm-hajnal}, we have $\delta(G) \ge 2(s-2)$ and $\delta(\overline{G}) \ge 2(t-2)$. Thus, $\Delta(G) = n-1 - \delta(\overline{G}) \le n-1 - 2(t-2) = n-2t+3$. Since $\delta(G) \le \Delta(G)$, we have $2(s-2) \le n-2t+3$, so $n \ge 2s+2t-7$.
\end{proof}

\section{Conclusion}

We used SAT solvers and LLMs to study doubly saturated Ramsey-good graphs. Our work suggests several further directions related to these graphs:
\begin{itemize}
    \item \Cref{conj-ds} is the main open problem raised by our work. Finding new infinite families of doubly saturated Ramsey-good graphs would constitute partial progress toward proving this conjecture and has the potential to uncover intriguing families of graphs. Indeed, there is reason to expect such graphs to have nice combinatorial or algebraic properties, as illustrated by the infinite families from \Cref{thm-r4t} and \Cref{conj-r3t}, the graph depicted in \Cref{fig:r37}, and the Paley graphs discussed in \Cref{sec-rss}.
    \item Another further direction is to better understand the behavior $\DS(s,t)$. In \Cref{q-minimal}, we asked whether $\DS(4,t) = 6t-11$. We are also interested in whether, for each fixed $s \ge 3$, the growth of $\DS(s,t)$ is linear; that is, is $\DS(s,t) = O_s(t)$? \Cref{thm-r4t} and \Cref{conj-r3t} provide some limited evidence in favor of this hypothesis. It would also be interesting to understand the asymptotics of $\DS(s,s)$. As a concrete question, is $\DS(s,s) = s^{O(1)}$?
    \item Given arbitrary graphs $H_1$ and $H_2$, one can consider graphs $G$ such that (i) $G$ does not contain $H_1$ as a subgraph, (ii) $\overline{G}$ does not contain $H_2$ as a subgraph, (iii) adding any edge to $G$ yields a graph that contains $H_1$ as a subgraph, and (iv) adding any edge to $\overline{G}$ yields a graph that contains $H_2$ as a subgraph. Taking $H_1 = K_s$ and $H_2 = K_t$ yields the doubly saturated $R(s,t)$-good graphs that we have been studying, although other choices of graphs may be equally interesting. The study of graphs satisfying conditions (i) and (ii) is a central topic in Ramsey theory~\cite{ramsey-subgraph,ramsey-subgraph-2,ramsey-subgraph-3,ramsey-subgraph-4}.
\end{itemize}

More generally, our work creates an impetus to find other types of mathematical problems to which our computer-assisted methodology can be fruitfully applied. The features that made this problem amenable to computer-assistance were the following:
\begin{itemize}
    \item Data for small cases are very helpful for understanding the problem while being hard to compute by hand, which makes SAT solvers and other computational experiments useful.
    \item There is a vast array of techniques that might be applicable to the problem, which makes leveraging the encyclopedic knowledge of LLMs powerful.
    \item Proofs are likely to involve relatively elementary combinatorial arguments, which allows them to be discovered and formalized by LLMs.
\end{itemize}
With the mathematical abilities of LLMs rapidly advancing, we expect that the last feature will become increasingly unnecessary.

\begin{credits}
\subsubsection{\ackname} This research is supported by the DARPA expMath program through the DARPA CMO contract number HR0011262E028. Przybocki was additionally supported by the NSF Graduate Research Fellowship Program under Grant No. DGE-2140739.

\subsubsection{\discintname}
The authors have no competing interests to declare that are
relevant to the content of this article.
\end{credits}
%
%
\bibliographystyle{splncs04}
\bibliography{bib}

\end{document}